\DeclarePairedDelimiter\floor{\lfloor}{\rfloor}
\newtheorem{remark}{Remark}
\newtheorem{assumption}{Assumption}
\newcommand{\be}{\begin{equation}}
\newcommand{\ee}{\end{equation}}
\newcommand{\req}[1]{(\ref{#1})}
\newcommand{\argmin}{\mathop{\rm argmin}}
\def\ew#1{{{\color{black}#1}}}
\def\fm#1{{{\color{black}#1}}}
\def\ss#1{{{\color{black}#1}}}
\def\sm#1{{{\color{black}#1}}}
\newtheorem{theorem}{Theorem}[section]
\newtheorem{lemma}[theorem]{Lemma}
\newcommand{\norm}[1]{\left|\left|#1\right|\right|}
\title{A General Framework of Exact Primal-Dual First Order Algorithms for Distributed Optimization$^*$
\thanks{$^*$ Support provided by DARPA Langrange HR-001117S0039} }
\author{Fatemeh Mansoori$^\dag$\thanks{$^\dag$Department of Electrical and Computer Engineering, Northwestern University} and Ermin Wei$^\dag$}
\begin{document}
\maketitle
\begin{abstract}
 We study the problem of minimizing a sum of local objective convex functions over a network of processors/agents. This problem naturally calls for distributed optimization algorithms, in which the agents cooperatively solve the problem through local computations and communications with neighbors. While many of the existing distributed algorithms with constant stepsize can only converge to a neighborhood of optimal solution, some recent methods based on augmented Lagrangian and method of multipliers can achieve exact convergence with a fixed stepsize.  However, these methods either suffer from slow convergence speed or require minimization at each iteration. In this work, we develop a class of distributed first order primal-dual methods, which allows for multiple primal steps per iteration.  This general framework makes it possible to control the trade off between the performance and the execution complexity in primal-dual algorithms. We show that for strongly convex and Lipschitz gradient objective functions, this class of algorithms converges linearly to the optimal solution under appropriate constant stepsize choices. Simulation results confirm the superior performance of our algorithm compared to existing methods.  

\end{abstract}

\section{Introduction}
\label{intro}
\sm{This paper focuses on solving the following optimization problem
\be \tilde{x}^*:=\argmin_{\tilde{x}\in\mathbb{R}} \sum_{i=1}^n f_i(\tilde{x})\label{mainproblem}\ee
over a network of $n$ agents (processors)\footnote{For representation simplicity, we consider the case with $\tilde{x}\in\mathbb{R}$. Our analysis can be easily generalized to the multidimensional case.}. The agents are connected with an undirected static graph $\mathcal{G(V,E)}$, with $\mathcal{V}$ and $\mathcal{E}$ being the set of vertices and edges respectively. Set $\mathcal{N}_i$ denotes the  neighbors of agent $i$. Each agent $i$ in the network has access to a real-valued convex local objective function $f_i$, and can only communicate with its neighbors defined by the graph.} \par \sm{Problems of form \req{mainproblem} that require distributed optimization algorithms are applicable to many important  areas such as sensor networks,  smart grids, robotics, and machine learning \cite{predd2007distributed, ren2007information, kekatos2013distributed, tsianos2012consensus}. The distributed equivalent of problem \req{mainproblem} can be formulated by defining local copies of the decision variable at each agent and rewriting the problem as the following \textit{consensus} problem
\begin{equation} \min_{x_i\in\mathbb{R}} \sum_{i=1}^n f_i(x_i)\quad\mbox{s.t}\quad x_i=x_j \quad\forall{(i,j)\in\mathcal{E}}.\label{consOpt}\end{equation} In this distributed setting, each agent balances the goal of minimizing its local objective function and ensuring that its decision variable is equal to all of its neighbors. } \par

One main category of distributed algorithms for solving problem \req{consOpt} is first order primal methods that can parallelize computations across multiple processors. These methods include different variations of distributed (sub)gradient descent (DGD)
\cite{nedic2001convergence,ram2009incremental,nedic2009distributed,ram2010distributed,nedic2011asynchronous,chen2012fast,jakovetic2014fast}.  In DGD-based methods, each agent updates its local estimate of the solution through a combination of a local gradient decent step and a consensus step (weighted average with neighbors variables). In order to converge to the exact solution, these algorithms need to use a diminishing stepsize, which results in a slow rate of convergence. A faster convergence to a neighborhood of the exact solution can be achieved by using a fixed stepsize. Recently, a class of algorithms based on DGD has been developed in \cite{berahas2018balancing}, which uses increasing number of consensus steps (linear increase with iteration number) to achieve exact linear convergence with fixed stepsize. \par  Another line of distributed optimization is inspired by Lagrange multiplier methods for constrained optimization. Specifically, Method of Multipliers (MM), based on augmented Lagrangian, has nice convergence guarantees in centralized setting  \cite{hestenes1969multiplier, bertsekas2014constrained}. Each iteration of MM includes minimizing the augmented Lagrangian in the primal space, followed by a dual variable update. This method is not implementable in a distributed setting due to the nonseparable augmentation term. Decentralized versions of augmented Lagrangian methods and alternating direction method of multiplier (ADMM) are studied in  \cite{bertsekas2011incremental, boyd2011distributed, wei2012distributed, mota2013d, wei20131, xiao2014proximal, shi2014linear, ling2015dlm, mokhtari2015decentralized, iutzeler2016explicit, jakovetic2015linear,wu2018decentralized}. Although these algorithms involve more computational complexity compared to primal methods, they guarantee convergence to the exact solution. Specifically algorithms in \cite{shi2014linear,ling2015dlm,mokhtari2015decentralized, nedic2018improved} are shown to converge linearly under the assumptions of strong convexity and Lipschitz gradient. 
Some other algorithms like EXTRA \cite{shi2015extra}, DIGing \cite{nedic2017achieving}, and DSA \cite{mokhtari2016dsa} can achieve exact convergence with a fixed stepsize without explicitly using dual variables. These algorithms, however, can be viewed as augmented Lagrangian primal-dual methods with a single gradient step in the primal space. These methods enjoy less computational complexity compared to ADMM-based methods and are proved to converge with a linear rate, under standard assumptions. \par Besides these first order methods, second order distributed algorithms are studied in \cite{mokhtari2015network, mokhtari2016decentralized, tutunov2016distributed, eisen2017decentralized,mansoori2017superlinearly}. These methods use Newton-type updates to achieve faster convergence and thus involve more computational complexity than first order methods. \par 
While existing distributed first order primal-dual algorithms can achieve exact convergence, they either suffer from slower convergence compared to MM or they require minimization at each iteration. In this paper we develop a class of distributed first order primal-dual methods, which does not require exact minimization at each iteration but allows for multiple primal steps per iteration.  This general framework makes it possible to control the trade off between the performance and the execution complexity in primal-dual algorithms.\par 
We develop our algorithm based on a general form of augmented Lagrangian, which is flexible in the augmentation term, and  prove global linear rate of convergence with a fixed stepsize, under the assumptions of strong convexity and Lipschitz gradient. Our framework includes EXTRA and DIGing as special cases for specific choices of augmented Lagrangian function and one primal step per iteration. \par The rest of this paper is organized as follows: Section II describes the development of our general framework. Section III contains the convergence analysis. Section IV presents the numerical experiments and Section V contains the concluding remarks. 
\section{Algorithm Development} 
To develop our algorithm, we express problem \req{consOpt} in the following compact form 
\be\min_x f(x)=\sum_{i=1}^n f_i(x_i)\quad\mbox{s.t}\quad Ax=0, \label{consensus}\ee where $x=[x_1, x_2, ..., x_n]^\prime\in\mathbb{R}^n$, \footnote{\fm{For a vector $x$ we denote its transpose by $x^\prime$ and for a matrix $A$ we denote its transpose by $A^\prime$.}} and $Ax=0$ represents all equality constraints. Matrix $A\in\mathbb{R}^{e\times n}$, $e=\vert \mathcal{E}\vert$, is the edge-node incidence matrix of the network graph and its null space is spanned by a vector of all ones.
\fm{Row $l$ of matrix $A$ corresponds to edge $l$, connecting vertices $i$ and $j$, and has $+1$ in column $i$ and $-1$ in column $j$ (or vice versa) and $0$ in all other columns \cite{bertsimas1997introduction}.} 
\begin{remark} Other choices for matrix $A$ include weighted incidence matrix \cite{wu2018decentralized}, graph Laplacian \cite{tutunov2016distributed} matrix, and weighted Laplacian matrix \cite{mokhtari2016decentralized, berahas2018balancing}.
\end{remark}
We denote by $x^*=[\tilde{x}^*, \tilde{x}^*,...,\tilde{x}^*]'$ the minimizer of problem \req{consensus} and we focus on developing a distributed algorithm which converges to $x^*$. 
In order to achieve exact convergence, we develop our algorithm based on the Lagrange multiplier methods. We form the following augmented Lagrangian \be L(x,\lambda)=f(x)+\lambda^\prime Ax+\frac{1}{2}x^\prime Bx, \label{auglag}\ee  
where $\lambda\in\mathbb{R}^e$ is the vector of Lagrange multipliers. Every dual variable $\lambda_l$ is associated with an edge $l=(i,j)$ and thus coupled between two agents. We assume that one of the two agents $i$ or $j$  choose to update the dual variable $\lambda_l$ through negotiation. The set of dual variables that agent $i$ updates is denoted by $\Lambda_i$ . We adopt the following assumptions on our problem.

\begin{assumption} \label{funcprop}
The local objective functions $f_i(x)$ are $m-$ strongly convex, twice differentiable, and $L-$ Lipschitz gradient.
\end{assumption}

\begin{assumption}
Matrix $B\in\mathbb{R}^{n\times n}$ is a symmetric positive semidefinite matrix, has the same null space as matrix $A$, i.e., $Bx=0$ only if $Ax=0$, and is compatible with network topology, i.e., $B_{ij}\neq 0$ only if $(i,j)\in\mathcal{E}$.
\end{assumption}
We assume these conditions hold for the rest of the paper. The first assumption requires the eigenvalues of the Hessian matrix of local objective functions to be bounded, i.e.,  $mI\preceq\nabla^2 f_i(x)\preceq LI$ for all $i$. This is a standard assumption in proving global linear rate of convergence. The second assumption requires matrix $B$ to represent the network topology, which is needed for distributed implementation. The other assumptions on matrix $B$ are required for convergence guarantees. Some examples of matrix $B$ are $B=A^\prime A=L$, with $L$ being the graph Laplacian matrix, and weighted Laplacian matrix. 
\par
One important strand of Lagrange multiplier methods is based on the Method of Multipliers (MM), which requires solving a minimization problem in the primal space, followed by a dual update at each iteration. Despite the nice convergence properties of MM (global linear convergence for strongly convex objective functions), its primal update is computationally costly (because of the exact minimization) and cannot be implemented in a distributed manner (because $x^\prime Bx$ is not separable).  An alternative is to replace the exact minimization in primal space with a single gradient step, which results in the following primal-dual update
\begin{equation}\begin{aligned}&x^{k+1}=x^{k}-\alpha\nabla_xL(x^k,\lambda^k)\\&= x^k-\alpha\nabla f(x^k)-\alpha A^\prime\lambda^k-\alpha B x^k,\label{Primalupdate}\end{aligned}\end{equation}
\[\lambda^{k+1}=\lambda^{k}+\beta\nabla_\lambda L(x^{k+1},\lambda^k)=\lambda^k+\beta Ax^{k+1},\]
\fm{where $\alpha$ and $\beta$ are constant stepsize parameters.}
The above iterate is implementable in a distributed manner because $\nabla f(x)$ is separable and thus locally available at agents and the terms $\lambda^\prime A$, $Bx$ and $Ax$ can be computed at each agent through communication with neighbors (because matrices $A$ and $B$ represent the graph topology). Although different variations of the above iteration have linear convergence guarantees \cite{shi2015extra,nedic2017achieving}, the linear convergence of MM is shown to be faster \cite{mokhtari2016decentralized}. One interesting question is whether by having multiple updates in the primal space, the performance of the algorithm gets closer to MM. \fm{By looking carefully through Eq. \req{Primalupdate}, we notice that each primal update at each agent requires one gradient evaluation and one round of communication with the neighbors.}
\begin{algorithm}
\caption{The general framework} \label{MCPD}
\begin{algorithmic}
\State Initialization: for $i=1, 2, ..., n$ each agent $i$ picks $x_i^0$, sets $\lambda_{l_i}^0=0\quad\forall{\lambda_{l_i}\in\Lambda_i}$, and determines $\alpha$, $\beta$, and $T<\infty$
\For{$k=1,2,...$}
\State \[x_i^{k+1,0}=x_i^k\]
\For{$t=1,2,...,T$}
\State\begin{align*}&x_i^{k+1,t}=x_i^{k+1,t-1}-\alpha\nabla f_i(x_i^{k})-\alpha\sum_{l=1}^e A^\prime_{il}\lambda_l^k\\&-\alpha\sum_{j=1}^n B_{ij}x_j^{k+1,t-1}\end{align*}
\EndFor
\State \[x_i^{k+1}=x_i^{k+1,T}\]
\State \[\lambda_{l_i}^{k+1}=\lambda_{l_i}^{k}+\beta\sum_{j=1}^n A_{l_ij}x_j^{k+1}\quad\forall{\lambda_{l_i}\in\Lambda_i}.\]
\EndFor
\end{algorithmic}
\end{algorithm}
A follow up question is whether in an algorithm with multiple primal updates per iteration, it is necessary or efficient to recompute the gradient or recommunicate with neighbors at each update.
\par Motivated by these questions, we present an improved class of distributed primal-dual algorithms for solving problem \req{consensus}, which allows for multiple primal updates at each iteration. \sm{To avoid computational complexity, in our algorithm, the gradient is evaluated once per iteration and is used for all primal updates in that iteration.} In our proposed framework in Algorithm \ref{MCPD}, at each iteration $k$, agent $i$ computes its local gradient $\nabla f_i(x_i^k)$, and performs a predetermined number ($T$) of primal updates by \ew{repeatedly communicating with neighbors without recomputing its gradient}. Each agent $i$ then updates its corresponding dual variables $\lambda_{l_i}^{k}$ by using local $x_i^{k+1,T}$ and $x_j^{k+1,T}$ from its neighbors. 
\par \fm{Under Assumption \ref{funcprop}, there exists a unique optimal solution $\tilde{x}^*$ for problem \req{mainproblem} and thus a unique $x^*$ \ew{exists}, at which the function value is bounded. Moreover, since $Null(A)\neq\emptyset$, the Slater's condition is satisfied. Consequently, strong duality holds and a dual optimal solution $\lambda^*$ exists.} We note that the projection of $\lambda^k$ in the null space of matrix $A^\prime $ would not affect the performance of algorithm, and if the algorithm starts at $\lambda=0$, then all the iterates $\lambda^k$ are in the column space of $A$ and hence orthogonal to null space of $A^\prime $. Hence, the optimal dual solution is not uniquely defined, since for any optimal dual solution $\lambda^*$, the dual solution $\lambda^*+u$, where $u$ is in the null space of $A^\prime $ is also optimal. Without loss of generality, we assume that in Algorithm \ref{MCPD}  $\lambda^0=0$, and when we refer to an optimal dual solution $\lambda^*$, we assume its projection onto the null space of $A^\prime $ is $0$. We note that $(x^*,\lambda^*)$ is a fixed point of Algorithm \ref{MCPD}.
\section{Convergence Analysis}
In order to analyze the convergence properties of our algorithm, we first express the primal-dual update in Algorithm \ref{MCPD} in the following compact form  
\be 
x^{k+1}=(I-\alpha B)^Tx^k-\alpha C\nabla f(x^k)-\alpha CA^\prime \lambda^k, \label{eq:xUpdate}
\ee
\be\lambda^{k+1}=\lambda^k+\beta Ax^{k+1}, \label{eq:muUpdate}\ee
 where $I$ denotes the identity matrix and $C=\sum_{t=0}^{T-1}(I-\alpha B)^t.$  We next proceed to prove the linear convergence rate for our proposed framework. In Lemmas \ref{lemma:Cprop} and \ref{lemma:gradF} we establish some key relations which we use to derive two fundamental inequalities in Lemmas \ref{FundamentalIneq} and \ref{lambdabound}. Finally we use these key inequalities to prove the global linear rate of convergence in Theorem \ref{thm:linconv}. \fm{In the following analysis, we denote by \ew{$\rho(\cdot)$} the largest eigenvalue of a symmetric positive semidefinite matrix and we define matrices $M$ and $N$ as follows \be M=C^{-1}(I-\alpha B)^T\quad\mbox{and}\quad N=\frac{1}{\alpha }(C^{-1}-M). \label{MNdef}\ee} In the next lemma we show that matrix $C$ is invertible and thus matrices $M$ and $N$ are well-defined.\begin{lemma}\label{lemma:Cprop} Consider the symmetric positive semidefinite matrix $B$ and matrices $C$, $M$, and $N$. If we choose $\alpha$ such that $I-\alpha B$ is positive definite, i.e., $\alpha <\frac{1}{\rho(B)}$, then matrix $C$ is invertible and symmetric, matrix $N$ is symmetric positive semidefinite, and matrix $M$ is symmetric positive definite with $\frac{(1-\alpha \rho(B))^T}{\sum_{t=0}^{T-1}(1-\alpha \rho(B))^t}I\preceq M\preceq\frac{1}{T}I.$
\end{lemma}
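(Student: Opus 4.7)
My plan is to exploit the fact that $C$, $M$, and $N$ are all polynomials in the symmetric matrix $B$, so they can be analyzed simultaneously through a common orthonormal eigenbasis of $B$. Under the stepsize condition $\alpha < 1/\rho(B)$, every eigenvalue $\mu$ of $B$ (which lies in $[0,\rho(B)]$ by positive semidefiniteness) yields $1-\alpha\mu \in [1-\alpha\rho(B),\,1] \subset (0,1]$. Hence $I-\alpha B$ is symmetric positive definite, and so is every power $(I-\alpha B)^t$. As a sum of symmetric positive definite matrices, $C = \sum_{t=0}^{T-1}(I-\alpha B)^t$ is symmetric positive definite and therefore invertible, so $M$ and $N$ are well-defined. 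Since $C$, $C^{-1}$, $(I-\alpha B)^T$, $M$, and $N$ are all polynomials in $B$, they pairwise commute, share an orthonormal eigenbasis with $B$, and are all symmetric.

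Next, I would diagonalize to read off eigenvalues. Setting $r := 1-\alpha\mu \in [1-\alpha\rho(B),\,1]$, the eigenvalues of $C$, $M$, and $N$ associated with the same eigenvector of $B$ are
\begin{equation*}
\sum_{t=0}^{T-1} r^t, \qquad \frac{r^T}{\sum_{t=0}^{T-1} r^t}, \qquad \text{and} \qquad \frac{1-r^T}{\alpha\sum_{t=0}^{T-1} r^t}.
\end{equation*}
Positive definiteness of $M$ follows because $r>0$; positive semidefiniteness of $N$ follows because $r\le 1$ gives $r^T\le 1$, while the denominator and $\alpha$ are positive.

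Finally, I would establish the spectral bounds on $M$. The upper bound is immediate: since $r^T \le r^t$ for every $t\in\{0,\dots,T-1\}$ and $r \in (0,1]$, summing over $t$ yields $T r^T \le \sum_{t=0}^{T-1} r^t$, so each eigenvalue of $M$ is at most $1/T$. For the lower bound, I would rewrite the eigenvalue as $1\big/\sum_{s=1}^{T} r^{-s}$; since $r\mapsto r^{-s}$ is nonincreasing on $(0,1]$ for each $s\ge 1$, the eigenvalue is a nondecreasing function of $r$, hence minimized at $r = 1-\alpha\rho(B)$, yielding the claimed lower bound. Reassembling the scalar inequalities into matrix inequalities via the spectral theorem completes the argument. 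The only step with any subtlety is the monotonicity argument underlying the lower bound; the rest is routine bookkeeping with simultaneous diagonalization.
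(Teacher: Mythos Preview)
Your proposal is correct and follows essentially the same approach as the paper: both arguments diagonalize $I-\alpha B$ (equivalently, work in a common orthonormal eigenbasis of $B$), reduce the matrix claims to scalar statements in the eigenvalues $r=1-\alpha\mu\in(0,1]$, and read off the properties of $C$, $M$, $N$ from the scalar formulas. The only notable difference is that the paper simply asserts that the eigenvalue $r^T/\sum_{t=0}^{T-1}r^t$ is increasing in $r$ and then evaluates at the endpoints, whereas you justify this monotonicity explicitly via the rewrite $1/\sum_{s=1}^{T} r^{-s}$; this is a slightly cleaner presentation of the same step, not a different route.
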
 
\begin{proof} Since $I-\alpha B$ is symmetric, it can be written as $I-\alpha B=VZV^\prime $, where $V\in\mathbb{R}^{n\times n}$ is an orthonormal matrix, i.e., $VV^\prime=I$, whose $i^{th}$ column $v_i$ is the eigenvector of $(I-\alpha B)$ and $v_i^\prime v_t=0$ for $i\neq t$ and $Z$ is the diagonal matrix whose diagonal elements, $Z_{ii}=\mu_i >0$, are the corresponding eigenvalues. We also note that since $V$ is an orthonormal matrix, $(I-\alpha B)^t=VZ^tV^\prime$. We have
\[C=\sum_{t=0}^{T-1}(I-\alpha B)^t=V\big(\sum_{t=0}^{T-1}Z^t\big)V^\prime =VQV^\prime,\]
Hence matrix $C$ is symmetric. We note that matrix $Q$ is a diagonal matrix with $Q_{ii}=1+\sum_{t=1}^{T-1}\mu_i ^t$. Since $\mu_i >0$ for all $i$, $Q_{ii}\neq 0$ and thus $Q$ is invertible and we have 
$C^{-1}=VQ^{-1}V^\prime.$ We also have $M =C^{-1}(I-\alpha  B)^T=VQ^{-1}V^\prime VZ^TV^\prime =VQ^{-1}Z^TV^\prime =VPV^\prime$
where $P$ is a diagonal matrix with $P_{ii}=\frac{\mu_i ^T}{1+\sum_{t=1}^{T-1}\mu_i ^t}$, consequently, matrix $M$ is symmetric.
We next find the smallest and largest eigenvalues of matrix $M$. We note that since $P_{ii}$ is increasing in $\mu_i $, the smallest and largest eigenvalues of $M$ can be computed using the smallest and largest eigenvalues of $I-\alpha B$. We have $0\preceq B\preceq \rho(B)I$, where $\rho(B)$ is the largest eigenvalue of matrix $B$. Therefore, the largest and smallest eigenvalues of $I-\alpha B$ are $1$ and $1-\alpha \rho(B)$ respectively. Hence, \[\frac{(1-\alpha \rho(B))^T}{\sum_{t=0}^{T-1}(1-\alpha \rho(B))^t}I\preceq M\preceq\frac{1}{T}I.\] We next use the eigenvalue decomposition of matrices $C^{-1}$ and $M$ to obtain \[C^{-1}-M=VQ^{-1}V^\prime -VPV^\prime =V(Q^{-1}-P)V^\prime,\]
 where $Q^{-1}-P$ is a diagonal matrix, and its $i^{th}$ diagonal element is equal to $\frac{1-\mu_i ^T}{1+\sum_{t=1}^{T-1}\mu_i ^t}$. Since $0<\mu_i \leq 1$ for all $i$, $\frac{1-\mu_i ^T}{1+\sum_{t=1}^{T-1}\mu_i ^t}\geq 0$ and hence $N$ is symmetric positive semidefinite.\end{proof} 
\begin{lemma}\label{lemma:gradF} Consider the primal-dual iterates as in Algorithm \ref{MCPD} and recall the definitions of matrices $M$ and $N$ from Eq. \req{MNdef}, if $\alpha <\frac{1}{\rho(B)}$, then 
	\begin{align*} &\alpha (\nabla f(x^k)- \nabla f(x^*) )=  M(x^k-x^{k+1}) +\\& \alpha  (\beta  A^\prime A-N) (x^{k+1} - x^*) -\alpha  A^\prime (\lambda^{k+1} -\lambda ^*),\end{align*}
\end{lemma}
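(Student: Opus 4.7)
The plan is to start from the compact primal update \eqref{eq:xUpdate} and algebraically manipulate it into the desired form using the definitions of $M$ and $N$, the dual update \eqref{eq:muUpdate}, and the KKT conditions at the fixed point $(x^*,\lambda^*)$.

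First I would rewrite \eqref{eq:xUpdate} as
\als{\alpha C \nabla f(x^k) = (I-\alpha B)^T x^k - x^{k+1} - \alpha C A'\lambda^k.}
Since Lemma~\ref{lemma:Cprop} guarantees $C$ is invertible when $\alpha<1/\rho(B)$, I can left-multiply by $C^{-1}$ to obtain
\als{\alpha \nabla f(x^k) = M x^k - C^{-1} x^{k+1} - \alpha A'\lambda^k.}
Adding and subtracting $M x^{k+1}$ produces the desired difference $M(x^k - x^{k+1})$ on the right together with a residual $-(C^{-1}-M) x^{k+1} = -\alpha N x^{k+1}$ by the definition \eqref{MNdef} of $N$. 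Substituting $\lambda^k = \lambda^{k+1} - \beta A x^{k+1}$ from the dual update \eqref{eq:muUpdate} then converts the $\lambda^k$ term into a $\lambda^{k+1}$ term plus an extra $\alpha\beta A'A\,x^{k+1}$ contribution, giving
\als{\alpha \nabla f(x^k) = M(x^k-x^{k+1}) + \alpha(\beta A'A - N)x^{k+1} - \alpha A'\lambda^{k+1}.}

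To bring in $x^*$ and $\lambda^*$, I would subtract $\alpha \nabla f(x^*)$ from both sides and exploit two fixed-point facts. First, $x^*$ is the consensus minimizer so $A x^* = 0$; by Assumption~2 the null space of $B$ coincides with that of $A$, hence $B x^* = 0$ and $(I-\alpha B)^T x^* = x^*$, which immediately yields $N x^* = 0$ and $A'A\,x^* = 0$. Consequently $(\beta A'A - N) x^{k+1}$ can be rewritten as $(\beta A'A - N)(x^{k+1} - x^*)$ without changing the value. Second, $(x^*,\lambda^*)$ is a fixed point of Algorithm~\ref{MCPD}, so plugging $x^k = x^{k+1}=x^*$ and $\lambda^k=\lambda^*$ into the intermediate identity above gives the KKT relation $\alpha \nabla f(x^*) = -\alpha A'\lambda^*$, i.e., $\nabla f(x^*) = -A'\lambda^*$. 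Substituting this into the subtracted equation collapses the $\lambda^{k+1}$ and $\lambda^*$ terms into $-\alpha A'(\lambda^{k+1}-\lambda^*)$, producing exactly the claimed identity.

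The main conceptual step is recognizing that the invertibility of $C$ (via Lemma~\ref{lemma:Cprop}) unlocks the rewrite in terms of $M$ and $N$; the rest is bookkeeping. The only place that requires care is justifying the replacements $Nx^* = 0$ and $A'Ax^* = 0$, which rely critically on Assumption~2 linking the null spaces of $A$ and $B$. Everything else is direct substitution and collecting terms, so no sharp inequality or delicate estimate is needed here.
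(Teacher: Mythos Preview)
Your proof is correct and follows essentially the same route as the paper: rewrite the primal update \eqref{eq:xUpdate}, use the invertibility of $C$ from Lemma~\ref{lemma:Cprop} to pass to $M$ and $N$, substitute $\lambda^k=\lambda^{k+1}-\beta Ax^{k+1}$, and difference against the fixed-point identity at $(x^*,\lambda^*)$. The only cosmetic difference is ordering: the paper keeps the $C$ on the left, adds and subtracts $(I-\alpha B)^T x^{k+1}$, writes the analogous fixed-point equation, subtracts, and then left-multiplies by $C^{-1}$ at the end; you multiply by $C^{-1}$ first and insert $x^*$ via the observations $Nx^*=0$ and $A'Ax^*=0$ rather than by subtracting the full fixed-point equation. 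Both yield the same identity with the same ingredients.
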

\begin{proof}
	At each iteration, from Eq. \req{eq:xUpdate} we have
	\[ \alpha C\nabla f(x^k)  = (I-\alpha B)^Tx^k-x^{k+1} -\alpha C A^\prime  \lambda^k.\]
	Moreover, from Eq. \req{eq:muUpdate} we have
	$\lambda ^k = \lambda ^{k+1} - \beta  A x^{k+1}.$ We can substitute this expression for $\lambda^k$ into the previous equation and have
	\begin{equation}\begin{aligned} &\alpha C\nabla f(x^k)  =  (I-\alpha B)^Tx^k-x^{k+1}  \\ &-\alpha C A^\prime  (\lambda ^{k+1} - \beta  A x^{k+1}) =(I-\alpha  B)^T(x^k-x^{k+1}) +\\& \big(\alpha  \beta  CA^\prime A-I+(I-\alpha  B)^T\big) x^{k+1} -\alpha C A^\prime \lambda^{k+1},\label{gradeq}\end{aligned}\end{equation} where we added and subtracted a term of $(I-\alpha  B)^T x^{k+1}$.
Since an optimal solution pair $(x^*, \lambda^*)$ is a fixed point of the algorithm update, we also have
	\[ \alpha  C\nabla f(x^*)  =   \big(\alpha \beta  CA^\prime A-I+(I-\alpha  B)^T\big) x^{*} -\alpha  C A^\prime \lambda^{^*}.\]
	We then subtract the above inequality from Eq. \req{gradeq} and multiply both sides by $C^{-1}$ [c.f. Lemma \ref{lemma:Cprop}],  to obtain
	\begin{align*} &\alpha  (\nabla f(x^k)- \nabla f(x^*) )=  C^{-1}(I-\alpha  B)^T(x^k-x^{k+1}) \\&+  \alpha \Big(\beta  A^\prime A-\frac{1}{\alpha }\big(C^{-1}-C^{1}(I-\alpha  B)^T\big)\Big) (x^{k+1} - x^*) \\&-\alpha  A^\prime (\lambda^{k+1} -\lambda ^*).\end{align*}\end{proof}
\begin{lemma} \label{FundamentalIneq}
Consider the primal-dual iterates as in Algorithm \ref{MCPD} and recall the definition of matrices $M$ and $N$ from Eq. \req{MNdef}. If $\alpha<\frac{1}{\rho(B)}$, we have 
\ew{for any $\eta>0$},
\begin{equation}\begin{aligned}  &(x^{k+1}-x^*)^\prime (2\alpha  m I-\alpha \eta I +2\alpha (N-\beta  A^\prime A))   (x^{k+1}-x^*)+\\&\frac{\alpha }{\beta }\norm{\lambda^{k+1}-\lambda^k}^2+ (x^{k+1}-x^k)^\prime (M- \frac{\alpha  L^2}{\eta}I) (x^{k+1}-x^k)\\&\leq
 (x^k-x^*)^\prime M (x^k -x^*) - (x^{k+1}-x^*)^\prime M(x^{k+1}-x^*)
 \\&+ \frac{\alpha }{\beta }\left(\norm{\lambda^k-\lambda^*}^2 - \norm{\lambda^{k+1}-\lambda^*}^2\right).\label{fund_ineq}
 \end{aligned}\end{equation}
\end{lemma}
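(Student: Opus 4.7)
The plan is to take the inner product of both sides of the identity in Lemma \ref{lemma:gradF} with $2(x^{k+1}-x^*)$, and then convert each of the four resulting scalar terms into the form appearing in \req{fund_ineq}: the gradient term becomes a lower bound via strong convexity combined with a Young-type step to handle the $x^k$ vs $x^{k+1}$ mismatch, the $M$-term telescopes via the standard three-point identity (which needs symmetry of $M$ from Lemma \ref{lemma:Cprop}), the $(\beta A^\prime A - N)$-quadratic moves directly to the left with a sign flip, and the cross term with $\lambda^{k+1}-\lambda^*$ telescopes in $\lambda$ after using the dual update rule.

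For the gradient term, I would split $\nabla f(x^k)-\nabla f(x^*) = [\nabla f(x^{k+1})-\nabla f(x^*)] + [\nabla f(x^k)-\nabla f(x^{k+1})]$. Strong convexity gives $2\alpha (x^{k+1}-x^*)^\prime[\nabla f(x^{k+1})-\nabla f(x^*)] \geq 2\alpha m\norm{x^{k+1}-x^*}^2$, while Young's inequality with parameter $\eta>0$ together with $L$-Lipschitz gradient yields
\begin{align*}
2\alpha (x^{k+1}-x^*)^\prime[\nabla f(x^k)-\nabla f(x^{k+1})] \geq -\alpha\eta\norm{x^{k+1}-x^*}^2 - \tfrac{\alpha L^2}{\eta}\norm{x^{k+1}-x^k}^2.
\end{align*}

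For the $M$-term, symmetry of $M$ (Lemma \ref{lemma:Cprop}) gives the three-point identity
\begin{align*}
2(x^{k+1}-x^*)^\prime M(x^k-x^{k+1}) &= (x^k-x^*)^\prime M(x^k-x^*) - (x^{k+1}-x^*)^\prime M(x^{k+1}-x^*) \\ &\quad - (x^{k+1}-x^k)^\prime M(x^{k+1}-x^k).
\end{align*}
For the dual cross term, since $Ax^*=0$ and $\lambda^{k+1}-\lambda^k=\beta Ax^{k+1}$, I have $A(x^{k+1}-x^*)=\tfrac{1}{\beta}(\lambda^{k+1}-\lambda^k)$, and the analogous three-point identity in $\lambda$ gives
\begin{align*}
-2\alpha(x^{k+1}-x^*)^\prime A^\prime(\lambda^{k+1}-\lambda^*) = -\tfrac{\alpha}{\beta}\bigl(\norm{\lambda^{k+1}-\lambda^k}^2 + \norm{\lambda^{k+1}-\lambda^*}^2 - \norm{\lambda^k-\lambda^*}^2\bigr).
\end{align*}

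Collecting these expressions and moving all quadratics in $(x^{k+1}-x^*)$, the quadratic in $(x^{k+1}-x^k)$, and $\tfrac{\alpha}{\beta}\norm{\lambda^{k+1}-\lambda^k}^2$ to the left side yields exactly \req{fund_ineq}. The work is almost entirely bookkeeping; the only substantive steps are the Young-step (which introduces the free parameter $\eta$ together with the cost $\tfrac{\alpha L^2}{\eta}\norm{x^{k+1}-x^k}^2$ that is later absorbed into $M - \tfrac{\alpha L^2}{\eta} I$) and the observation that the dual update combined with $Ax^*=0$ lets us replace $A(x^{k+1}-x^*)$ by a $\lambda$-difference, which is precisely what produces the telescoping structure in $\lambda$. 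The main place to be careful is the sign tracking when the $(\beta A^\prime A - N)$ block is moved to the left to appear as $(N - \beta A^\prime A)$.
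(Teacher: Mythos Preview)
Your proposal is correct and follows essentially the same approach as the paper: both arguments take the inner product of the identity in Lemma~\ref{lemma:gradF} with $2(x^{k+1}-x^*)$, split the gradient term via $\nabla f(x^{k+1})$, apply strong convexity plus Young's inequality with Lipschitz continuity, and use the symmetric three-point identities for the $M$-term and (after the dual update substitution) the $\lambda$-term. The only cosmetic difference is that the paper opens with the strong convexity inequality and then substitutes Lemma~\ref{lemma:gradF}, whereas you open with Lemma~\ref{lemma:gradF} and then invoke strong convexity; the resulting bookkeeping is identical.
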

\begin{proof}
From strong convexity of function $f(x)$, we have
\begin{align*} & 2\alpha m\norm{x^{k+1}-x^*}^2 \leq\\& 2\alpha  (x^{k+1}-x^*)^\prime (\nabla f(x^{k+1}) - \nabla f(x^*)) \\&= 2\alpha  (x^{k+1}-x^*)^\prime (\nabla f(x^{k+1})-\nabla f(x^k)) \\&+ 2\alpha  (x^{k+1}-x^*)^\prime (\nabla f(x^k)-\nabla f(x^*)),\end{align*} where we add and subtract a term $(x^{k+1}-x^*)^\prime \nabla f(x^k)$.
We can substitute the equivalent expression of $ \alpha (\nabla f(x^k)- \nabla f(x^*) )$ from  Lemma \ref{lemma:gradF} and have
\begin{equation}\begin{aligned} & 2\alpha  m \norm{x^{k+1}-x^*}^2\leq2\alpha  (x^{k+1}-x^*)^\prime\times\\&  (\nabla f(x^{k+1})-\nabla f(x^k)) + 2(x^{k+1}-x^*)^\prime\big(M(x^k-x^{k+1}) \\& + \alpha  (\beta  A^\prime A-N) (x^{k+1} - x^*) -\alpha  A^\prime (\lambda^{k+1} -\lambda ^*)\big) 
=\\& 2\alpha  (x^{k+1}-x^*)^\prime (\nabla f(x^{k+1})-\nabla f(x^k))+2\alpha  (x^{k+1}-x^*)^\prime\\ &  (\beta  A^\prime A - N)(x^{k+1}-x^*)+2(x^{k+1}-x^*)^\prime M(x^k-x^{k+1})\\&  -2\alpha (x^{k+1}-x^*)^\prime  A^\prime (\lambda^{k+1} -\lambda ^*). \label{eq:update1} \end{aligned}\end{equation}
\sm{From Young's inequality we have}  $2\alpha  (x^{k+1}-x^*)^\prime (\nabla f(x^{k+1})-\nabla f(x^k)) \leq \alpha \eta \norm{x^{k+1}-x^*}^2+\frac{\alpha }{\eta}\norm{\nabla f(x^{k+1})-\nabla f(x^k)}^2$ for all $\eta>0$. 
This can be further reduced to $2\alpha  (x^{k+1}-x^*)^\prime (\nabla f(x^{k+1})-\nabla f(x^k)) \leq \alpha \eta \norm{x^{k+1}-x^*}^2+\frac{\alpha  L^2 }{\eta}\norm{x^{k+1}-x^k}^2$, by Lipschitz gradient property of function $f$.
By dual update Eq. \eqref{eq:muUpdate} and feasibility of $x^*$, we have 
\[Ax^{k+1} = \frac{1}{\beta }(\lambda^{k+1}-\lambda^k),\quad Ax^* = 0.\]
These two equations combined yields
\[\alpha (x^{k+1}-x^*)^\prime  A^\prime (\lambda^{k+1} -\lambda ^*) = \frac{\alpha }{\beta }(\lambda^{k+1}-\lambda^k)^\prime (\lambda^{k+1}-\lambda^*).\]
Hence we can rewrite the right hand side of Eq. \eqref{eq:update1} as  
\begin{align*} & 2\alpha (x^{k+1}-x^*)^\prime (\beta A^\prime A-N)(x^{k+1}-x^*)+ \alpha \eta\\& \Vert x^{k+1}- x^*\Vert^2+\frac{\alpha  L^2}{\eta}\norm{x^{k+1}-x^k}^2 - 2(x^{k+1}-x^*)^\prime\\&\times M(x^{k+1}-x^k) -2\frac{\alpha }{\beta }(\lambda^{k+1}-\lambda^k)^\prime (\lambda^{k+1}-\lambda^*).
 \end{align*}
 We now focus on the last two terms. First, since matrix $M$ is symmetric, we have
 \begin{align*}& -2(x^{k+1}-x^*)^\prime M(x^{k+1}-x^k)  = (x^k-x^*)^\prime M (x^k -x^*) -\\& (x^{k+1}-x^*)^\prime M(x^{k+1}-x^*) - (x^{k+1}-x^k)^\prime M (x^{k+1}-x^k).\end{align*}
 Similarly, we have $-2\frac{\alpha }{\beta }(\lambda^{k+1}-\lambda^k)^\prime (\lambda^{k+1}-\lambda^*) =$
 \begin{align*} \frac{\alpha }{\beta }\left(\norm{\lambda^k-\lambda^*}^2 - \norm{\lambda^{k+1}-\lambda^*}^2-\norm{\lambda^{k+1}-\lambda^k}^2\right).
  \end{align*}
 Now we combine the terms in the preceding three relations and have 
\begin{align*}&2\alpha  m \norm{x^{k+1}-x^*}^2\leq 2\alpha (x^{k+1}-x^*)^\prime (\beta A^\prime A-N)\times\\&(x^{k+1}-x^*)+ \alpha \eta \norm{x^{k+1}-x^*}^2+\frac{\alpha  L^2}{\eta}\norm{x^{k+1}-x^k}^2+\\& (x^k-x^*)^\prime M(x^k -x^*) - (x^{k+1}-x^*)^\prime M(x^{k+1}-x^*)\\&- (x^{k+1}-x^k)^\prime M (x^{k+1}-x^k)\\&+ \frac{\alpha }{\beta }\left(\norm{\lambda^k-\lambda^*}^2 - \norm{\lambda^{k+1}-\lambda^*}^2-\norm{\lambda^{k+1}-\lambda^k}^2\right).
 \end{align*} By rearranging the terms in the above inequality, we complete the proof.
\end{proof}
\begin{lemma}\label{lambdabound}
Consider the primal-dual iterates as in Algorithm \ref{MCPD} and recall the definition of symmetric matrices $M$ and $N$ from Eq.\req{MNdef} then if $\alpha<\frac{1}{\rho(B)}$, for $d, g, e>1$ we have
\begin{equation}\begin{aligned}\label{ineq:toCompare}
  &\norm{\lambda^{k+1}-\lambda^*}^2\leq \frac{d}{\alpha ^2 s(AA^\prime )} \left(\frac{e}{e-1} \rho(M)^2+e \alpha ^2L^2\right)\\&\times\left((x^k-x^{k+1})^\prime (x^k-x^{k+1}) \right)+ \frac{d}{(d-1)\alpha ^2 s(AA^\prime )}\\&\times \left(\frac{g}{g-1} \alpha ^2 \rho \big((\beta  A^\prime A-N)^2\big)+\alpha ^2gL^2\right)\\&\times\left((x^{k+1} - x^*)^\prime  (x^{k+1} - x^*)\right),
  \end{aligned}\end{equation}
with $s(AA^\prime )$ being the smallest nonzero eigenvalue of the positive semidefinite matrix $AA^\prime$.
\end{lemma}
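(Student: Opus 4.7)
The plan is to start from Lemma \ref{lemma:gradF}, which, after isolating the dual residual, can be written as
\begin{align*}
\alpha A^\prime(\lambda^{k+1}-\lambda^*) &= M(x^k-x^{k+1}) \\
&\quad + \alpha(\beta A^\prime A-N)(x^{k+1}-x^*) \\
&\quad - \alpha(\nabla f(x^k)-\nabla f(x^*)),
\end{align*}
and then convert a bound on $\norm{A^\prime(\lambda^{k+1}-\lambda^*)}^2$ into one on $\norm{\lambda^{k+1}-\lambda^*}^2$. The enabling structural fact is that the dual iterates generated by Algorithm \ref{MCPD} from $\lambda^0=0$ remain in $\mm{col}(A)$ (since every increment $\beta Ax^{k+1}$ does), and the paper's convention forces $\lambda^*$ into $\mm{col}(A)$ as well. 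Hence $\lambda^{k+1}-\lambda^*$ is orthogonal to $\mm{null}(A^\prime)=\mm{null}(AA^\prime)$, which gives $\norm{A^\prime(\lambda^{k+1}-\lambda^*)}^2=(\lambda^{k+1}-\lambda^*)^\prime AA^\prime(\lambda^{k+1}-\lambda^*)\geq s(AA^\prime)\norm{\lambda^{k+1}-\lambda^*}^2$.

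Next I would telescope the gradient through the intermediate iterate $x^{k+1}$, writing $\nabla f(x^k)-\nabla f(x^*)=[\nabla f(x^k)-\nabla f(x^{k+1})]+[\nabla f(x^{k+1})-\nabla f(x^*)]$, and group the right-hand side of the identity as $U+V$ with
\begin{align*}
U &= M(x^k-x^{k+1}) - \alpha(\nabla f(x^k)-\nabla f(x^{k+1})), \\
V &= \alpha(\beta A^\prime A-N)(x^{k+1}-x^*) - \alpha(\nabla f(x^{k+1})-\nabla f(x^*)).
\end{align*}
This grouping is arranged so that $\norm{U}^2$ is controlled purely by $\norm{x^k-x^{k+1}}^2$ (via $\norm{Mz}^2\leq\rho(M)^2\norm{z}^2$ and Lipschitz continuity of $\nabla f$) while $\norm{V}^2$ is controlled purely by $\norm{x^{k+1}-x^*}^2$ (via the spectral bound on $\beta A^\prime A-N$ and Lipschitz continuity).

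The inequality is then delivered by three nested applications of the elementary bound $\norm{a+b}^2\leq\xi\norm{a}^2+\tfrac{\xi}{\xi-1}\norm{b}^2$ for $\xi>1$: once on $\norm{U+V}^2$ with parameter $d$ (producing the outer factors $d$ and $\tfrac{d}{d-1}$), then once inside $\norm{U}^2$ with parameter $e$ (producing the $\tfrac{e}{e-1}\rho(M)^2+e\alpha^2 L^2$ block), and once inside $\norm{V}^2$ with parameter $g$ (producing the $\tfrac{g}{g-1}\alpha^2\rho((\beta A^\prime A-N)^2)+g\alpha^2 L^2$ block). Dividing the resulting inequality by $\alpha^2 s(AA^\prime)$ yields the desired form.

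The main obstacle is purely combinatorial bookkeeping: the gradient difference has to be split across both $U$ and $V$ so that after the inner Young splits the $L^2$ contribution lands additively inside each block and is matched to the correct spectral factor, rather than producing cross terms in $\norm{x^k-x^{k+1}}\cdot\norm{x^{k+1}-x^*}$. A secondary point to verify is that the hypothesis $\alpha<1/\rho(B)$ is exactly what Lemma \ref{lemma:Cprop} needs so that $M$ is well defined with finite $\rho(M)$; no direct use of $\alpha<1/\rho(B)$ beyond that is required in this lemma.
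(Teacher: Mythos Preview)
Your proposal is correct and follows essentially the same route as the paper: both start from the identity of Lemma \ref{lemma:gradF}, insert $\nabla f(x^{k+1})$ to split the gradient difference, group the four terms into the same $U$ and $V$, apply the Young-type inequality $\norm{a+b}^2\le \xi\norm{a}^2+\tfrac{\xi}{\xi-1}\norm{b}^2$ three times with parameters $d,e,g$, invoke Lipschitz continuity and the spectral bounds $M^2\preceq\rho(M)^2 I$, $(\beta A'A-N)^2\preceq\rho((\beta A'A-N)^2)I$, and use the column-space argument $\norm{A'(\lambda^{k+1}-\lambda^*)}^2\ge s(AA')\norm{\lambda^{k+1}-\lambda^*}^2$ to pass to the dual residual.
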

\begin{proof}
We recall the following relation from Lemma \ref{lemma:gradF}
	\begin{align*}&\alpha  A^\prime (\lambda^{k+1} -\lambda ^*)=  M(x^k-x^{k+1}) + \\& \alpha  (\beta  A^\prime A-N) (x^{k+1} - x^*) - \alpha (\nabla f(x^k)- \nabla f(x^*) ).\end{align*}	
Thus we have
\begin{align*}& \norm{\alpha  A^\prime (\lambda^{k+1} -\lambda ^*)}^2 = \big\Vert M(x^k-x^{k+1}) + \\& \alpha  (\beta  A^\prime A-N) (x^{k+1} - x^*) - \alpha (\nabla f(x^k)- \nabla f(x^*) )\big\Vert^2.\end{align*}
We can add and subtract a term of $\nabla f(x^{k+1})$ and equivalently express the above relation as
\begin{align*}&\norm{\alpha  A^\prime (\lambda^{k+1} -\lambda ^*)}^2 =\\& \big\Vert M(x^k-x^{k+1}) +  \alpha  (\beta  A^\prime A-N) (x^{k+1} - x^*) -\\&\alpha \left(\nabla f(x^{k})- \nabla f(x^{k+1} )\right)- \alpha \left(\nabla f(x^{k+1})- \nabla f(x^*) \right)\big\Vert^2.\end{align*}
By using the fact that $(a+b)^\prime (a+b) \leq \frac{\xi}{\xi-1}a^\prime a + \xi b^\prime b$, for any vectors $a$, $b$, and scalar $\xi>1$, we have
for any scalars $d,e,g>1$, 
\begin{align*}&\norm{\alpha  A^\prime (\lambda^{k+1} -\lambda ^*)}^2 \leq\\& d\Big(\frac{e}{e-1} (x^k-x^{k+1})^\prime M^2(x^k-x^{k+1}) +e \alpha ^2\times\\& \left(\nabla f(x^{k})- \nabla f(x^{k+1} )\right)^\prime \left(\nabla f(x^{k})- \nabla f(x^{k+1} )\right)\Big) +\\& \frac{d}{d-1}\Big(\frac{g}{g-1} \alpha ^2(x^{k+1} - x^*)^\prime   (\beta  A^\prime A-N)^2 (x^{k+1} - x^*) \\&+ \alpha ^2g\left(\nabla f(x^{k+1})- \nabla f(x^*) \right)^\prime \left(\nabla f(x^{k+1})- \nabla f(x^*) \right)\Big).
\end{align*}

Since $\lambda^0=0$ and $\lambda^{k+1} = \lambda^k + \beta  Ax^{k+1}$, we have that $\lambda^k$ is in the column space of $A$ and hence orthogonal to the null space of $A^\prime $, therefore, we have
$\norm{\alpha  A^\prime (\lambda^{k+1} -\lambda ^*)}^2\geq \alpha ^2s(AA^\prime )\norm{\lambda^{k+1}-\lambda^*}^2.$ Using the above two relations and Lipschitz gradient property of function $f$ , we have
 \begin{align*}
 &\alpha ^2 s(AA^\prime )\norm{\lambda^{k+1}-\lambda^*}^2\leq d\Big((x^k-x^{k+1})^\prime \\&\times \left[\frac{e}{e-1} M^2+e \alpha ^2L^2I\right](x^k-x^{k+1}) \Big) + \frac{d}{d-1}\times\\&\Big((x^{k+1} - x^*)^\prime  \left[\frac{g}{g-1} \alpha ^2 (\beta  A^\prime A-N)^2+\alpha ^2gL^2I\right] \\&\times(x^{k+1} - x^*)\Big).
 \end{align*}
 By using the facts that $M^2\preceq\rho(M^2) I$ and $(\beta  A^\prime A-N)^2\preceq\rho\big((\beta  A^\prime A-N)^2\big)I$, we complete the proof.
  \end{proof}
\begin{theorem} \label{thm:linconv}
Consider the primal-dual iterates as in Algorithm \ref{MCPD}, recall the definition of matrix $M$ from Eq. \req{MNdef}, and define $ z^k=\begin{bmatrix} x^k \\ \lambda^k \end{bmatrix}, \quad G=\begin{bmatrix} M & \textbf{0} \\ \textbf{0} & \frac{\alpha}{\beta} I \end{bmatrix}.$  If Assumption \ref{funcprop} holds true, and the primal and dual stepsizes satisfy 
\be0<\beta <\frac{2m}{\rho(A^\prime A)}. \label{eq:betabound}\ee \be 0<\alpha <\frac{1-\left(\frac{L^2}{L^2+\eta\rho(B)}\right)^{1/T}}{\rho(B)}, \label{eq:alphabound}\ee with $0<\eta<2m-\beta \rho(A^\prime A)$, then there exists a $\delta>0$ such that 
\[\norm{z^{k+1}-z^*}^2_G\leq\frac{1}{1+\delta}\norm{z^k-z^*}^2_G,\] that is $\norm{z^k-z^*}_G$ converges Q-linearly to $0$  and consequently $\norm{x^k-x^*}_M$ converges R-linearly to $0$.
\end{theorem}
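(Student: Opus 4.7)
The plan is to start from the fundamental inequality in Lemma \ref{FundamentalIneq} and observe that, by the block-diagonal structure of $G$, its right-hand side is exactly the telescoping difference $\norm{z^k-z^*}_G^2-\norm{z^{k+1}-z^*}_G^2$. Establishing the stated Q-linear contraction $\norm{z^{k+1}-z^*}_G^2\leq\frac{1}{1+\delta}\norm{z^k-z^*}_G^2$ is equivalent to $\delta\norm{z^{k+1}-z^*}_G^2\leq\norm{z^k-z^*}_G^2-\norm{z^{k+1}-z^*}_G^2$, so the task reduces to lower-bounding the left-hand side of \req{fund_ineq} by $\delta\norm{z^{k+1}-z^*}_G^2$ for some $\delta>0$. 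Once this is shown, R-linear convergence of $\norm{x^k-x^*}_M$ is immediate from $\norm{x^k-x^*}_M^2\leq\norm{z^k-z^*}_G^2$.

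Next I would dispatch the three summands on the left-hand side of \req{fund_ineq} individually. Using the dual update \req{eq:muUpdate} together with $Ax^*=0$ rewrites $\frac{\alpha}{\beta}\norm{\lambda^{k+1}-\lambda^k}^2$ as $\alpha\beta(x^{k+1}-x^*)^\prime A^\prime A(x^{k+1}-x^*)$, which cancels one of the two $-\alpha\beta A^\prime A$ contributions in the quadratic form on $(x^{k+1}-x^*)$. Combined with $N\succeq 0$ from Lemma \ref{lemma:Cprop}, the $(x^{k+1}-x^*)$-block then dominates $\alpha(2m-\eta-\beta\rho(A^\prime A))\norm{x^{k+1}-x^*}^2$, which is strictly positive by the hypothesis $0<\eta<2m-\beta\rho(A^\prime A)$ allowed by \req{eq:betabound}. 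For the $(x^{k+1}-x^k)$-block to be positive one needs $M\succ\frac{\alpha L^2}{\eta}I$; substituting the smallest eigenvalue of $M$ from Lemma \ref{lemma:Cprop} and letting $\sigma=1-\alpha\rho(B)$, this reduces to $(L^2+\eta\rho(B))\sigma^T>L^2$, whose rearrangement is precisely the $\alpha$-bound in \req{eq:alphabound}.

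At this stage the left-hand side of \req{fund_ineq} is bounded below by a positive multiple of $\norm{x^{k+1}-x^*}^2$ plus a positive multiple of $\norm{x^{k+1}-x^k}^2$. On the other side, Lemma \ref{lambdabound} yields $\norm{\lambda^{k+1}-\lambda^*}^2\leq \Phi_1\norm{x^k-x^{k+1}}^2+\Phi_2\norm{x^{k+1}-x^*}^2$ for explicit positive constants $\Phi_1,\Phi_2$ depending on the free parameters $d,e,g>1$, so that $\norm{z^{k+1}-z^*}_G^2$ is itself bounded above by $\rho(M)\norm{x^{k+1}-x^*}^2+\frac{\alpha}{\beta}(\Phi_1\norm{x^k-x^{k+1}}^2+\Phi_2\norm{x^{k+1}-x^*}^2)$. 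Comparing coefficients of $\norm{x^{k+1}-x^*}^2$ and $\norm{x^k-x^{k+1}}^2$ on the two sides, any $\delta>0$ that is small enough to satisfy two explicit inequalities of the form $\delta\cdot(\text{positive number})\leq(\text{positive number})$ delivers the desired contraction. Such a $\delta$ exists because both right-hand sides are strictly positive under the theorem's hypotheses.

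The main obstacle is the bookkeeping: one has to keep the positive constants $\Phi_1,\Phi_2$ and the free parameters $d,e,g$ consistent across the combined inequality, and then verify that the $\alpha$-bound in \req{eq:alphabound} is tight enough to force the smallest eigenvalue of $M$ to strictly exceed $\alpha L^2/\eta$. Neither step is mathematically deep, but both require careful algebra so that all matrix inequalities line up and a strictly positive $\delta$ survives in the final coefficient comparison.
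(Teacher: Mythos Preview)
Your proposal is correct and follows essentially the same route as the paper: use Lemma~\ref{FundamentalIneq} so that the right-hand side becomes the telescoping $G$-norm difference, rewrite $\frac{\alpha}{\beta}\norm{\lambda^{k+1}-\lambda^k}^2$ via \req{eq:muUpdate} and $Ax^*=0$, verify positivity of the $(x^{k+1}-x^*)$- and $(x^{k+1}-x^k)$-coefficients under \req{eq:betabound}--\req{eq:alphabound}, and then invoke Lemma~\ref{lambdabound} to compare coefficients and extract a strictly positive $\delta$. The only cosmetic difference is that the paper keeps the comparison in matrix-inequality form and additionally optimizes the free parameters $e,g$ to tighten the explicit $\delta$-bounds, whereas you (equivalently) bound $\norm{z^{k+1}-z^*}_G^2$ from above by scalar multiples of $\norm{x^{k+1}-x^*}^2$ and $\norm{x^{k+1}-x^k}^2$ before matching coefficients.
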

\begin{proof}
To show linear convergence, we will show that
\begin{align*}   \delta&\left((x^{k+1}-x^*)^\prime M (x^{k+1} -x^*) + \frac{\alpha }{\beta }\norm{\lambda^{k+1}-\lambda^*}^2\right)\leq\\&  (x^k-x^*)^\prime M(x^k -x^*) - (x^{k+1}-x^*)^\prime M(x^{k+1}-x^*)\\&
+ \frac{\alpha }{\beta }\left(\norm{\lambda^k-\lambda^*}^2 - \norm{\lambda^{k+1}-\lambda^*}^2\right),
\end{align*}
for some $\delta>0$. By using the result of Lemma \ref{FundamentalIneq} [c.f. Eq. \req{fund_ineq}], it suffices to show that there exists a $\delta>0$ such that
\begin{align*}   &\delta\left((x^{k+1}-x^*)^\prime M (x^{k+1} -x^*) + \frac{\alpha }{\beta }\norm{\lambda^{k+1}-\lambda^*}^2\right) \\ &\leq (x^{k+1}-x^*)^\prime (2\alpha  m I-\alpha \eta I +2\alpha (N-\beta  A^\prime A))\times  \\ & (x^{k+1}-x^*)+ (x^{k+1}-x^k)^\prime (M- \frac{\alpha  L^2}{\eta}I) (x^{k+1}-x^k)\\& +\frac{\alpha }{\beta }\norm{\lambda^{k+1}-\lambda^k}^2.\end{align*}We now use Eq. \req{eq:muUpdate} together with the fact that $Ax^*=0$ to obtain $\norm{\lambda^{k+1}-\lambda^k}^2 =\beta ^2(x^{k+1}-x^*)^\prime (A^\prime A)(x^{k+1}-x^*),
$ which we can substitute into the previous inequality and have that we need to show
\begin{align*}   &\delta\left((x^{k+1}-x^*)^\prime M (x^{k+1} -x^*) + \frac{\alpha }{\beta }\norm{\lambda^{k+1}-\lambda^*}^2\right)\\&\leq(x^{k+1}-x^*)^\prime (2\alpha  m I-\alpha \eta I +2\alpha (N-\beta  A^\prime A))  \\ &\times(x^{k+1}-x^*)+\alpha \beta (x^{k+1}-x^*)^\prime (A^\prime A)(x^{k+1}-x^*)\\&+ (x^{k+1}-x^k)^\prime (M- \frac{\alpha  L^2}{\eta}I) (x^{k+1}-x^k).\end{align*}
We collect the terms and we will focus on showing 
\begin{equation}
\begin{aligned}\label{ineq:linearRateToShow}   &\norm{\lambda^{k+1}-\lambda^*}^2\leq  \frac{\beta }{\delta\alpha }(x^{k+1}-x^k)^\prime (M- \frac{\alpha  L^2}{\eta}I) \\&\times(x^{k+1}-x^k)+\frac{\beta }{\delta\alpha }(x^{k+1}-x^*)^\prime \big(2\alpha  m I-\alpha \eta I +\\&2\alpha (N-\beta  A^\prime A)-\delta M+\alpha \beta  A^\prime A\big)(x^{k+1}-x^*) .\end{aligned}\end{equation}
We compare Eq.\ \eqref{ineq:linearRateToShow} with Eq.\ \eqref{ineq:toCompare} [c.f. Lemma \ref{lambdabound}], and we need to have for some $\delta>0$ 
  \begin{align*}     &\frac{d}{\alpha ^2 s(AA^\prime )} \left(\frac{e\rho (M)^2}{e-1} +e \alpha ^2L^2\right)\big((x^k-x^{k+1})^\prime (x^k-\\&x^{k+1}) \big) + \frac{d}{(d-1)\alpha ^2 s(AA^\prime )} \Big(\frac{g\alpha ^2}{g-1}  \rho \big((\beta  A^\prime A-N)^2\big)+\\&\alpha ^2gL^2\Big)\left((x^{k+1} - x^*)^\prime  (x^{k+1} - x^*)\right)\leq  \frac{\beta }{\delta\alpha }(x^{k+1}-x^k)^\prime\\& \times(M- \frac{\alpha  L^2}{\eta}I) (x^{k+1}-x^k)+\frac{\beta }{\delta\alpha }(x^{k+1}-x^*)^\prime\big(2\alpha  m I\\&-\alpha \eta I +2\alpha (N-\beta  A^\prime A)-\delta M+\alpha \beta  A^\prime A\big)(x^{k+1}-x^*) .\end{align*}
  
  This is satisfied if 
  \begin{align*}&\frac{\beta }{\delta\alpha }(M- \frac{\alpha  L^2}{\eta}I) \succcurlyeq \frac{d}{\alpha ^2 s(AA^\prime )} \left(\frac{e\rho(M)^2}{e-1} +e \alpha ^2L^2\right)I,\\
  &\frac{\beta }{\delta\alpha }(2\alpha  m-\alpha \eta I +2\alpha  N-\alpha \beta  A^\prime A-\delta M)\succcurlyeq \\& \frac{d}{(d-1)\alpha ^2 s(AA^\prime )} \left(\frac{g\alpha ^2 \rho}{g-1}  \big((\beta  A^\prime A-N)^2\big)+g\alpha ^2L^2\right)I.
  \end{align*}
  Since the previous two inequalities holds for all $e, d, g>1$, we can find the parameters $e$ and $g$ to make the right hand sides the smallest, which would give us the most freedom to choose algorithm parameters.  The term $\frac{e}{e-1} \rho(M)^2 + e \alpha ^2L^2$ is convex in $e$ and to minimize it we set derivative to $0$ and have 
$e = 1+ \frac{\rho(M)}{\alpha  L}.$
Similarly, we choose $g$ to be
$ g =1 + \frac{ \sqrt{\rho \big((\beta  A^\prime A-N)^2\big)}}{ L}.$
With these parameter choices, we have
$\left(\frac{e}{e-1} \rho(M)^2+e \alpha ^2L^2\right) = (\rho(M)+ \alpha  L)^2,$ and
$\left(\frac{g}{g-1} \alpha ^2 \rho \big((\beta  A^\prime A-N)^2\big)+g\alpha ^2L^2\right) = \alpha ^2 \Big(\sqrt{\rho\big((\beta  A^\prime A-N)^2\big)}+L\Big)^2.$ The desired relations can now be expressed as $\frac{\beta }{\delta\alpha }(M- \frac{\alpha  L^2}{\eta}I) \succcurlyeq \frac{d}{\alpha ^2 s(AA^\prime )} \big(\rho(M)+ \alpha  L\big)^2I$ and
  $\frac{\beta }{\delta\alpha }(2\alpha  m-\alpha \eta I +2\alpha  N-\alpha \beta  A^\prime A-\delta M)\succcurlyeq  \frac{d}{(d-1) s(AA^\prime )}\Big(\sqrt{\rho\big((\beta  A^\prime A-N)^2\big)}+L\Big)^2I.
  $
By using the fact that $N$ and $A^\prime A$ are  positive semidefinite matrices, and the result of Lemma \ref{lemma:Cprop} to bound eigenvalues of matrix $M$, the desired relations can be satisfied if

  \begin{align*}&\frac{\beta }{\delta\alpha }\Big(\frac{(1-\alpha \rho(B))^T}{\sum_{t=0}^{T-1}(1-\alpha \rho(B))^t}-\frac{\alpha  L^2}{\eta}\Big) \geq\\& \frac{d}{\alpha ^2 s(AA^\prime )} (\frac{1}{T}+ \alpha  L)^2,\\
  &\frac{\beta }{\delta\alpha }(2\alpha  m-\alpha \eta-\alpha \beta \rho(A^\prime A) -\frac{\delta}{T}) \geq \\& \frac{d}{(d-1) s(AA^\prime )} \Big(\sqrt{\rho\big((\beta  A^\prime A-N)^2\big)}+L\Big)^2.
  \end{align*}
For the first inequality, we can multiply both sides by $\delta\alpha $ and rearrange the terms to have
\be
\delta \leq \frac{\alpha \beta  \Big(\frac{(1-\alpha \rho(B))^T}{\sum_{t=0}^{T-1}(1-\alpha \rho(B))^t}-\frac{\alpha  L^2}{\eta}\Big)s(AA^\prime )  }{d(\frac{1}{T}+ \alpha  L)^2}.\label{firstbound}
\ee
We can similarly solve for the second inequality, 
\be \delta\leq\frac{ \beta (2\alpha  m-\alpha \eta-\alpha \beta \rho (A^\prime A) )}{ \frac{d\alpha }{(d-1) s(AA^\prime )}   \Big(\sqrt{\rho\big((\beta  A^\prime A-N)^2\big)}+L\Big)^2+\frac{\beta }{T}}.\label{secondbound}
\ee We next show that the upper bounds on $\delta$ given in Eq. \req{firstbound} and Eq. \req{secondbound} are both positive. Since the dual step size $\beta$ satisfies Eq. \req{eq:betabound} and $0<\eta<2m-\beta \rho(A^\prime A)$, the right hand side of Eq. \req{secondbound} is positive. 

Moreover, in order for the right hand side of Eq. \req{firstbound} to be positive we need
\be  \frac{(1-\alpha \rho(B))^T}{\sum_{t=0}^{T-1}(1-\alpha \rho(B))^t}-\frac{\alpha  L^2}{\eta}>0. \label{eq:alpha}\ee
Since $1-\alpha \rho(B)\neq 1$ [c.f. Eq. \req{eq:alphabound}], we have \[\sum_{t=0}^{T-1}(1-\alpha \rho(B))^t=\frac{1-(1-\alpha \rho(B))^T}{1-(1-\alpha \rho(B))}=\frac{1-(1-\alpha \rho(B))^T}{\alpha \rho(B)}.\] Therefore inequality \eqref{eq:alpha} can be written as
\[\frac{(1-\alpha \rho(B))^T}{1-(1-\alpha \rho(B))^T}\alpha \rho(B)-\alpha \frac{L^2}{\eta}>0,\] which holds true for $\alpha$ satisfying Eq. \req{eq:alphabound}.

Hence, the parameter set is nonempty and thus we can find \begin{align*} & 0<\delta\leq\min\Bigg\lbrace\frac{\alpha \beta  \Big(\frac{(1-\alpha \rho(B))^T}{\sum_{t=0}^{T-1}(1-\alpha \rho(B))^t}-\frac{\alpha  L^2}{\eta}\Big)s(AA^\prime )  }{d(\frac{1}{T}+ \alpha  L)^2},\\& \frac{ \beta (2\alpha  m-\alpha \eta-\alpha \beta \rho (A^\prime A) )}{ \frac{d }{(d-1) s(AA^\prime )}\alpha   (\rho(\beta  A^\prime A-N)+L)^2+\frac{\beta }{T}}\Bigg\rbrace \end{align*} which establishes linear rate of convergence.
\end{proof}	
\ss{\begin{remark}\label{Bchoice} If we choose $B=\beta A^\prime A$, we have $N-\beta A^\prime A=0$, and from the analysis of the above theorem we can see that the upper bound on $\beta$ can be removed. \sm{In this case, by choosing $T=1$, the stepsize upper bound in Eq. \req{eq:alphabound} and the linear rate parameter recover those of EXTRA\cite{shi2015extra}.} 
\end{remark}}
\sm{\begin{remark}\label{Tchoice}
To find an optimal value for the number of primal updates per iteration, $T$, leading to the best convergence rate, we can optimize over various parameters in the analysis. While a general result is quite messy, we can show that the upper bound on $T\alpha$ [c.f. Eq. \req{eq:alphabound}] is increasing in $T$ and approaches $-\ln{\frac{L^2}{L^2+\eta\rho(B)}}$ for large values of $T$. This suggests that the benefits in improving convergence speed from increasing $T$ diminishes for large $T$. 
\end{remark}}
\sm{\begin{remark} In our algorithm, the stepsize parameters are  common among all agents and computing them requires global variables across the network. This global variables can be obtained by applying a consensus algorithm prior to the main algorithm \cite{shi2015extra,wu2018decentralized, mokhtari2015network}.
\end{remark}}
\section{Simulation Results} 
In this section we present some numerical experiments, where we compare the performance of our proposed algorithms with other first order exact methods. We also study the performance of our algorithms on networks with different sizes and topologies. In all experiments we set $B=\beta A^\prime A$ for our algorithm. \par We first consider solving a classification problem using regularized logistic regression. We consider a problem with $K$ training samples that are uniformly distributed over $n=10$ agents in a network with $4-$regular graph, in which agents first form a ring and then each agent gets connected to two other neighbors (one from each side). Each agent $i$ has access to  a batch of data with size $k_i=\floor{\frac{K}{n}}$. This problem can be formulated as follows
\[\min_x f(x)=\frac{\nu}{2}\norm{x}^2+\frac{1}{K}\sum_{i=1}^n\sum_{j=1}^{k_i}\log\big[1+\exp(-v_{ij}u_{ij}^\prime x)\big],\]
where $u_{ij}$ and $v_{ij}$, $j\in\{ 1, 2, ..., k_i\}$ are the feature vector and the label for the data point $j$ associated with agent $i$ and the regularizer $\frac{\nu}{2}\big\Vert x\big\Vert^2$ is added to avoid overfitting. We can write this objective function in the form of $f(x)=\sum_{i=1}^n f_i(x)$, where $f_i(x)$ is defined as 
\[f_i(x)=\frac{\nu}{2n}\norm{x}^2+\frac{1}{K}\sum_{j=1}^{k_i}\log\big[1+\exp(-v_{ij}u_{ij}^\prime x)\big].\]
\begin{figure} 
\vskip 0.2in
  \centering
  \subfloat{\includegraphics[width=0.229\textwidth]{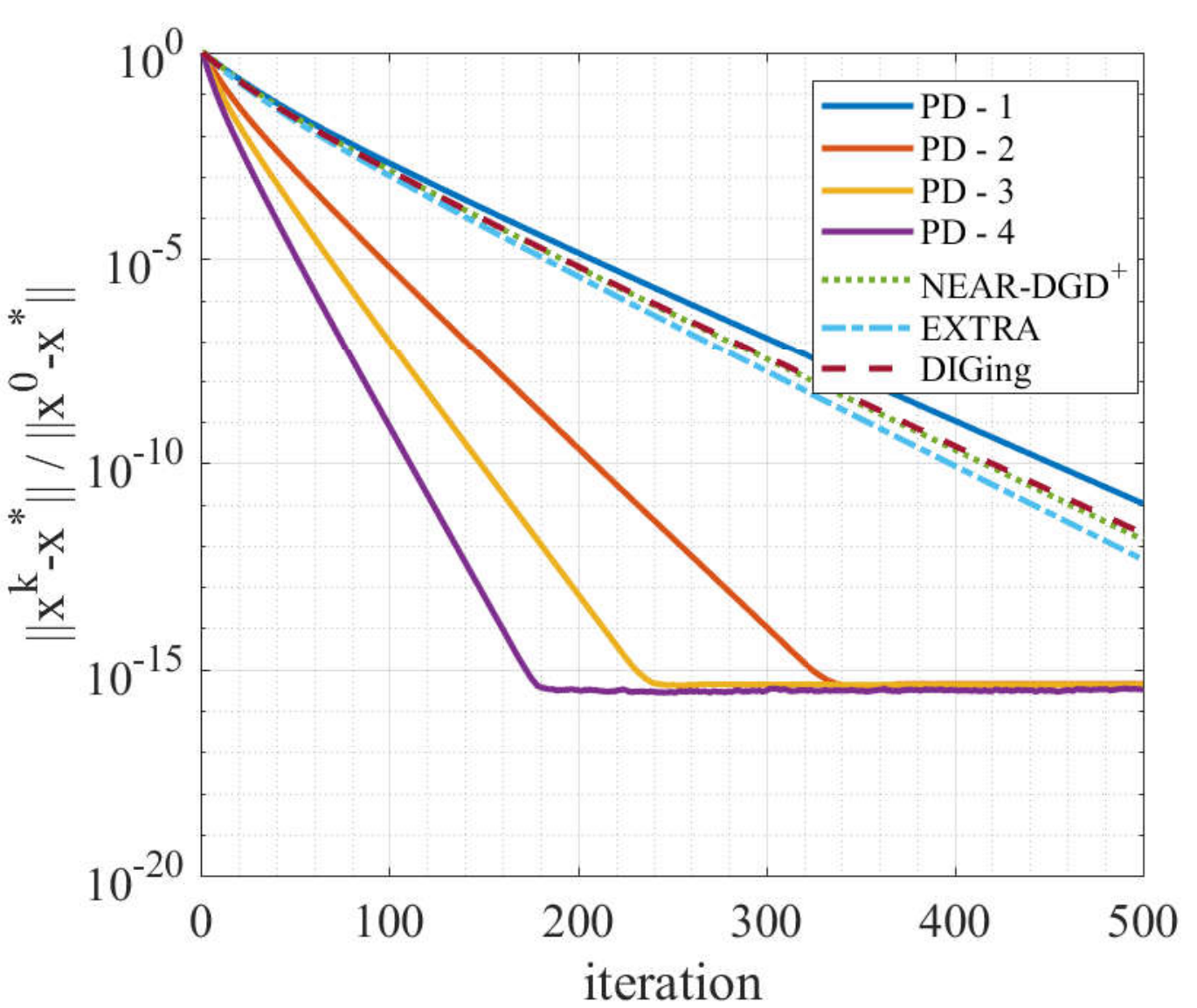}}
  \hfill
  \subfloat{\includegraphics[width=0.23\textwidth]{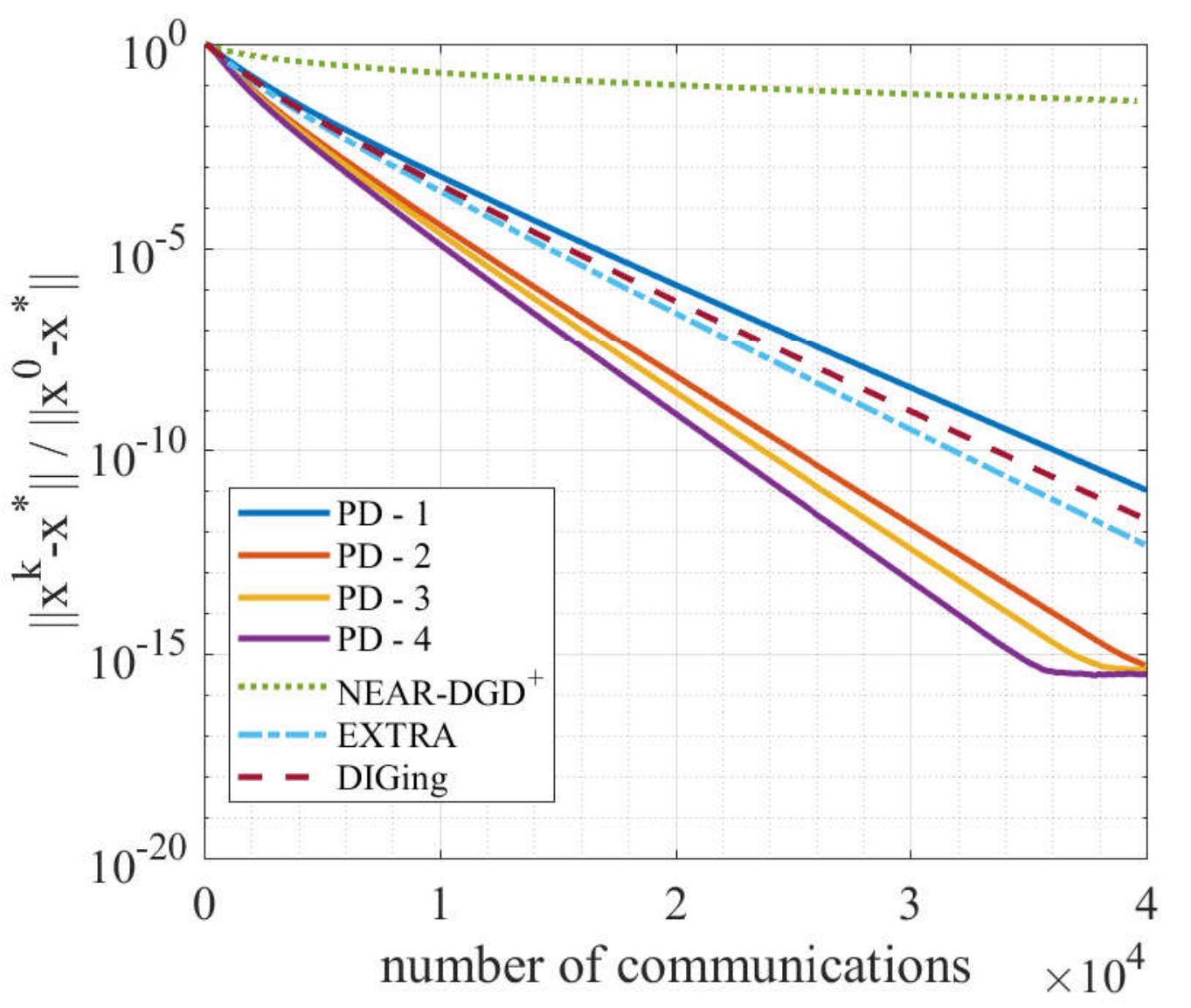}}
  \caption{Performance of primal-dual algorithm with 1-4 primal updates per iteration, NEAR-DGD$^+$, DIGing, and EXTRA in terms of the relative error.} \label{fig1}
  \vskip -0.2in
\end{figure}
\par In order to study the performance of our proposed framework on different networks, we consider $5-30$ agents which are connected with random $4-$regular graphs \fm{(agents first form a ring and then each agent gets connected to two other random agents)}. The objective function at each agent $i$ is of the form $f_i(x)=c_i(x_i-b_i)^2$ with $c_i$ and $b_i$ being integers that are randomly chosen from $[1,10^4]$ and $[1,100]$. We run the simulation for $1000$ random seeds and we plot the average number of steps and communications until the relative error is less than $\epsilon=0.01$, i.e., $\frac{\norm{x^k-x^*}}{\norm{x^0-x^*}}<0.01$ in Figure \ref{fig2}. The method of multipliers (centralized implementation) is also included as a benchmark. The primal stepsize parameter $\alpha$ at each seed is chosen based on the theoretical bound given in Eq. \req{eq:alphabound} and the dual stepsize is $\beta=T$. \fm{We observe that regardless the size of network,  increasing the number of primal updates per iteration improves the performance of the algorithm.} \ew{We also observe that as the network size grows, the number of steps to optimality of our proposed method grows sublinearly and the communication needed increases almost linearly.} 
In our simulations, we use the \textit{mushrooms} dataset \cite{chang2011libsvm},  with $8124$ data points, distributed uniformly over $10$ agents. Each data point has a feature vector of size $112$ and a label which is either $1$ or $-1$. \fm{In Figure \ref{fig1} we compare the performance of our primal-dual algorithms [c.f. Algorithm \ref{MCPD}] with $T=1, ..., 4$ (represented by PD - 1 to PD - 4), with EXTRA \cite{shi2015extra}, DIGing \cite{nedic2017achieving}, and NEAR-DGD$^+$ \cite{berahas2018balancing}, in terms of relative error, $\frac{\norm{x^k-x^*}}{\norm{x^0-x^*}}$, with respect to number of iterations and number of communications.} The benchmark $x^*$ is computed using \textit{minFunc} software \cite{schmidt2005minfunc} and the stepsize parameters are manually tuned for each algorithm. We can clearly see that increasing the number of primal updates improves the performance of the algorithm, while incurring a  higher communication cost. \sm{In our experiments, we observed that by increasing $T$ to larger numbers, the performance of the algorithms does not improve much, which can be explained through Remark \ref{Tchoice} and also the effect of the outdated gradients.} \fm{EXTRA and DIGing algorithms are special cases of our algorithm for specific choices of matrices $A$ and $B$ and one primal update per iteration, and thus have close performance to PD - 1.} In the NEAR-DGD$^{+}$ the number of communications increases linearly with the iteration number, which results in slow convergence with respect to the number of communications. We obtained similar results for other standard machine learning datasets, including \textit{diabetes-scale}, \textit{heart-scale}, \textit{a1a}, \textit{australian-scale}, and \textit{german} \cite{chang2011libsvm}. 
\begin{figure}
\vskip 0.13in
  \centering
  \subfloat{\includegraphics[width=0.228\textwidth]{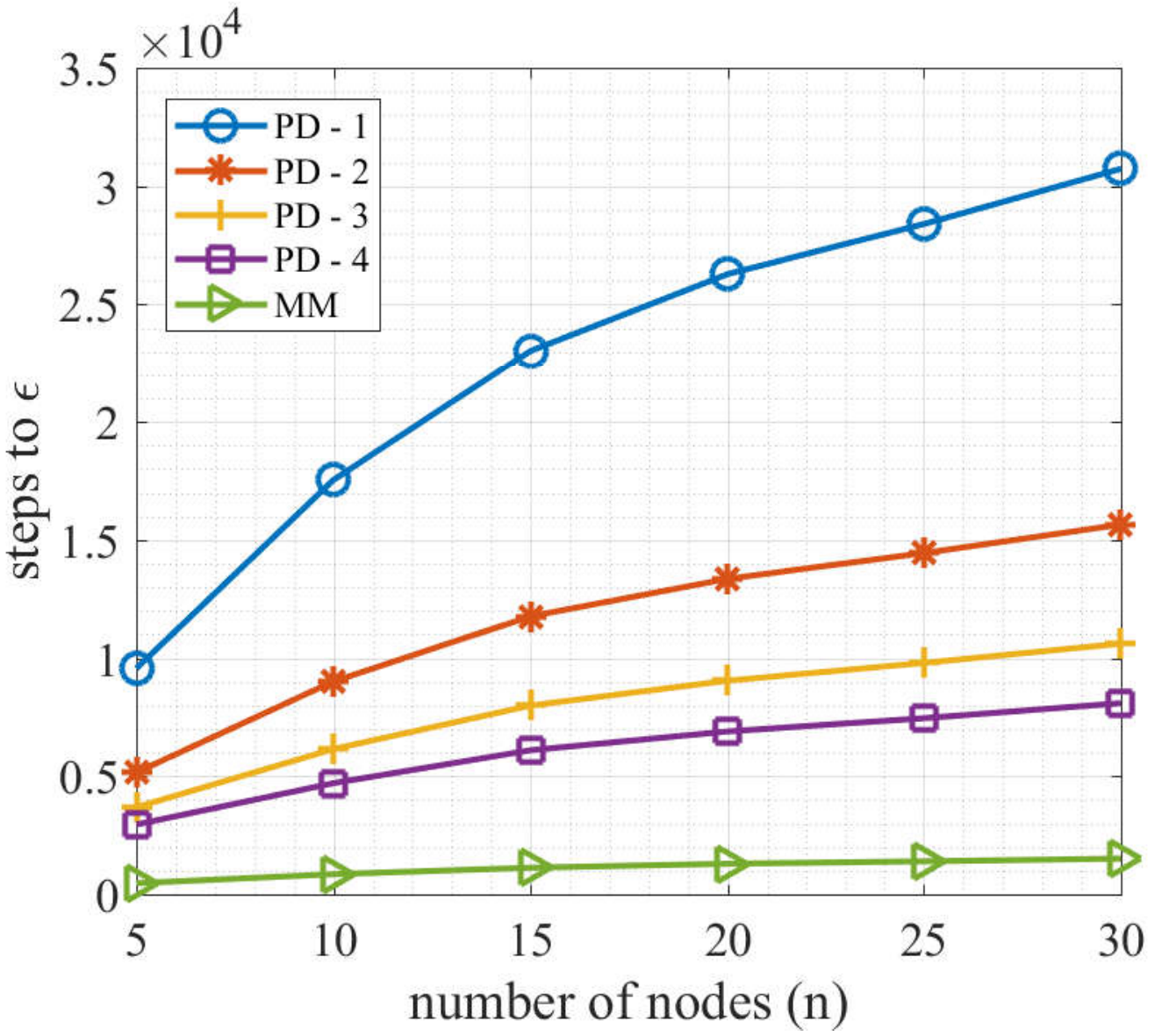}}
  \hfill
  \subfloat{\includegraphics[width=0.23\textwidth]{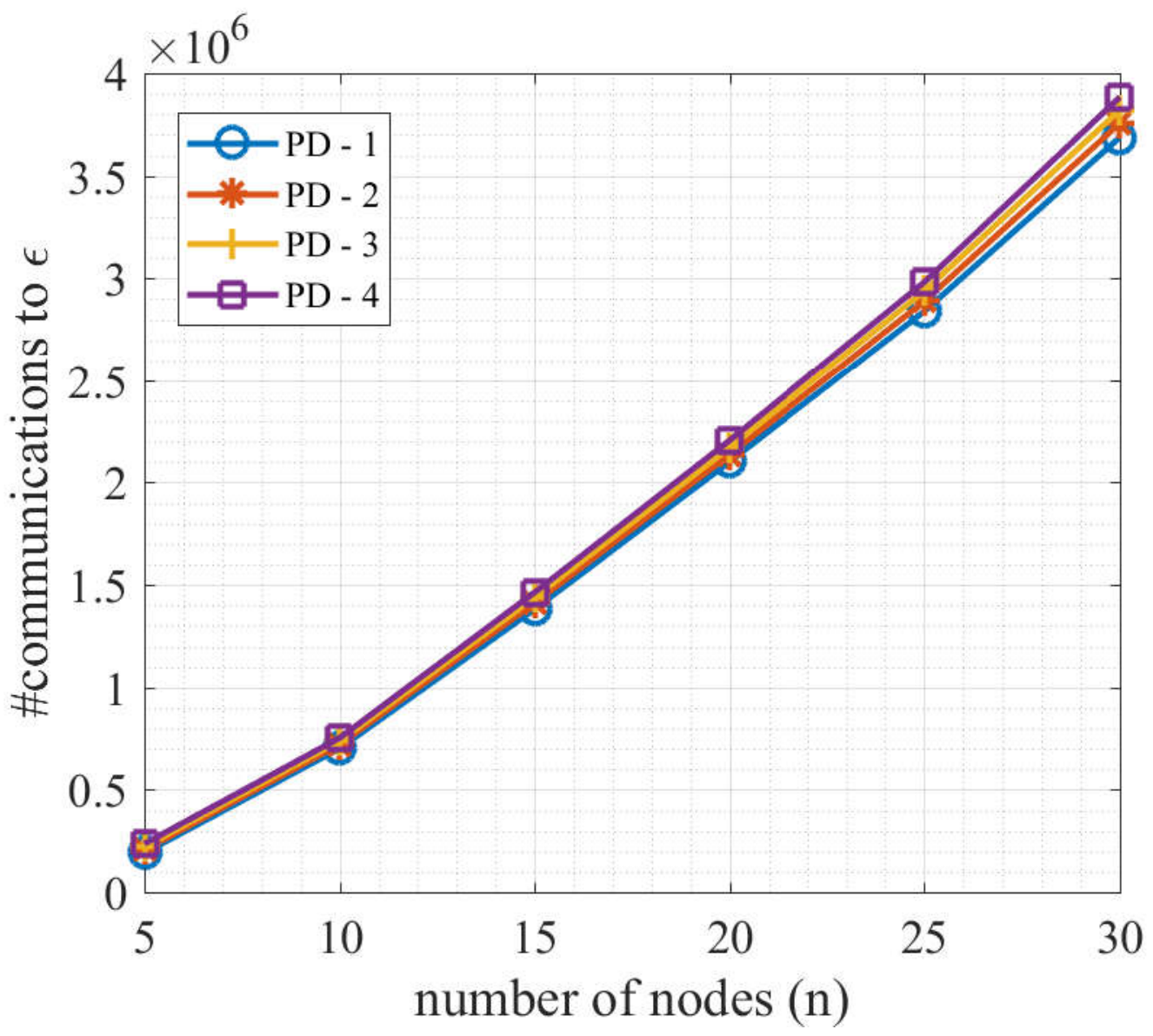}}
  \caption{Performance of primal-dual algorithm for $T=1, ...,4$ in terms of average number of steps and communications until the relative error is less than $\epsilon$.} \label{fig2}
  \vskip -0.2in
\end{figure}
\section{Final Remarks}
\fm{In this paper we propose a general framework of first order decentralized primal-dual optimization algorithms.
Our general class of algorithms allows for multiple primal updates per iteration, which results in design flexibility to control the trade off between execution complexity and performance of the algorithm. We show that the proposed class of algorithms converges to the exact solution with a global linear rate. The numerical experiments show the convergence speed improvement of primal-dual algorithms with multiple primal updates per iteration compared to other \ew{known} first order methods like EXTRA, DIGing, and NEAR-DGD$^+$. Possible future work includes analysis of the convergence properties for non-convex objective functions and extending the framework to second order primal-dual algorithms.} 
\bibliographystyle{plain}
\bibliography{CDCRef}
\end{document}